\newtheorem{theorem}{Theorem}
\newtheorem{lemma}{Lemma}
\newtheorem{corollary}{Corollary}
\newtheorem{remark}{Remark}
\font\smallit=cmti10
\begin{document}
\begin{center}
\uppercase{\bf Basic enumeration of graph compositions with a restricted number of components}
\vskip 20pt
{\bf Todd Tichenor}\\
{\smallit International College of Beijing, China Agricultural University, Beijing 100083, P.R. China}\\
{\tt todd\textunderscore tichenor\MVAt cau.edu.cn}\\
\vskip 10pt
\end{center}
\vskip 30pt
\vskip 30pt
\centerline{\bf Abstract}
\noindent
The concept of graph compositions is related to several number theoretic concepts, including partitions of positive integers and the cardinality of the power set of finite sets. This paper examines graph compositions where the total number of components is restricted and illustrates a connection between graph compositions and Stirling numbers of the second kind.
\pagestyle{myheadings}
\thispagestyle{empty}
\baselineskip=12.875pt
\vskip 30pt
\section{Introduction}
The idea of graph compositions was introduced by Knopfmacher and Mays \cite{km}. For a graph $G$, a graph composition is defined as a partition of $V(G)$ where each member of the partition induces a connected subgraph (i.e. every composition of $G$ induces a unique subgraph of $G$ with connected components). The composition number of $G$ [denoted $C(G)$] is the number of graph compositions of $G$. For example, a graph composition of the path on $n$ vertices corresponds to a composition of the integer $n$; additionally, a graph composition of the complete graph on $n$ vertices corresponds to a general partition of a set of cardinality $n$, therefore implying that its composition number is given by the $n\textsuperscript{th}$ Bell number \cite{km}.
\\
\\
Previous work has been conducted on the composition number of $K_n^{-G}$ (referred to in \cite{TM} as the deletion of $G$ from $K_n$), formed by deleting the edges of $G$ from the complete graph on $n$ vertices. Specifically, \cite{TM} connects $C(K_n^{-G})$ to sequences of the Bell numbers. This paper is comprised of two parts: the first examines the composition number of a few basic graphs where the number of components of each graph composition is restricted; the second studies compositions of the same kind in $K_N^{-G}$ and relates them to Stirling numbers of the second kind.
\\
\\
For a graph $G$ with $|V(G)| = n$, $\mathcal{C}^k(G)$ will denote the set of graph compositions of $G$ with exactly $k$ components and $C^k(G)$ will denote the number of compositions of $G$ with exactly $k$ components. For the theorems that follow, it is to be understood that $0 < k \leq n.$
\section{Restricted graph compositions of basic graphs}
\begin{lemma}\label{lemma1}
$C(G) = \underset{k=1}{\overset{n}{\sum}}C^k(G)$ for any graph $G$ such that $|V(G)| = n.$
\end{lemma}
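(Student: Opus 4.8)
The plan is to treat this as a direct application of the addition principle for finite sets, since the claim is really a statement that the collection of all graph compositions of $G$ is partitioned according to how many components each composition has. First I would unpack the notation: $C(G) = |\mathcal{C}(G)|$, where $\mathcal{C}(G)$ denotes the set of all graph compositions of $G$, and $C^k(G) = |\mathcal{C}^k(G)|$, where $\mathcal{C}^k(G)$ is the subset consisting of those compositions having exactly $k$ components. The goal is then to show $|\mathcal{C}(G)| = \sum_{k=1}^{n} |\mathcal{C}^k(G)|$.

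The key structural observation is that every graph composition of $G$ is a partition of $V(G)$ into nonempty blocks, each of which induces a connected subgraph. Since each block is nonempty and the blocks are disjoint and cover all $n$ vertices, the number of blocks (components) of any single composition is a well-defined integer lying between $1$ (the whole vertex set, when $G$ is connected enough to allow it, or in general the coarsest admissible partition) and $n$ (the partition into singletons, which is always admissible). Consequently each element of $\mathcal{C}(G)$ belongs to exactly one of the classes $\mathcal{C}^1(G), \dots, \mathcal{C}^n(G)$.

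From here I would note that the sets $\mathcal{C}^1(G), \dots, \mathcal{C}^n(G)$ are pairwise disjoint, because a given composition cannot simultaneously have $k$ and $k'$ components for $k \neq k'$, and that their union is all of $\mathcal{C}(G)$ by the previous paragraph. Finite additivity of cardinality (the sum rule) then yields
\[
C(G) = |\mathcal{C}(G)| = \left| \bigcup_{k=1}^{n} \mathcal{C}^k(G) \right| = \sum_{k=1}^{n} |\mathcal{C}^k(G)| = \sum_{k=1}^{n} C^k(G),
\]
which is the desired identity.

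I do not expect a genuine obstacle here, as the statement is essentially a bookkeeping identity; the only point requiring a word of care is the justification that the component count of any composition is confined to the range $1 \le k \le n$, so that the finite sum on the right indeed accounts for every composition without omission or double-counting. This is exactly the hypothesis $0 < k \le n$ already flagged in the paper.
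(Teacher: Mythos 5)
Your argument is correct: the paper itself offers no proof of this lemma (Remark \ref{rem10} merely notes it "may seem trivial"), and your sum-rule decomposition of $\mathcal{C}(G)$ into the pairwise disjoint classes $\mathcal{C}^1(G),\dots,\mathcal{C}^n(G)$ is exactly the justification the author leaves implicit. Nothing further is needed.
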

\begin{remark}\label{rem10}
The above lemma may seem trivial, but it is none-the-less useful for verifying $C^k(G)$ once it is found.
It should also be noted that $C^1(G) = 1$, $C^n(G) = 1$, and $C^k(G) = 0$ for $k > n$.
\end{remark}

\begin{theorem}\label{theorem28}
If $T_n$ is a tree with $n$ vertices, then $C^k(T_n) = \binom{n-1}{k-1}$.
\end{theorem}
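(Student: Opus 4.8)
The plan is to set up an explicit bijection between the graph compositions of $T_n$ and the subsets of its edge set $E(T_n)$, and then to read off the number of components directly from the size of the chosen subset. The essential structural fact driving everything is that $T_n$, being a tree on $n$ vertices, has exactly $n-1$ edges and contains no cycles.

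First I would describe the map in both directions. Given a composition $P \in \mathcal{C}^k(T_n)$, associate to it the set $S(P) \subseteq E(T_n)$ consisting of all edges whose two endpoints lie in the same block of $P$. In the reverse direction, given any $S \subseteq E(T_n)$, form the spanning subgraph $(V(T_n), S)$ and let $P(S)$ be the partition of $V(T_n)$ into the connected components of this subgraph. I expect the verification that these two maps are mutually inverse — rather than the counting — to be the main (though modest) obstacle, since it is precisely here that one must ensure no composition is missed and none is double-counted.

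The one point that genuinely uses the tree hypothesis is showing $S(P(S)) = S$: the inclusion $S \subseteq S(P(S))$ is immediate, and for the reverse, if an edge $uv$ had both endpoints in the same component of $(V(T_n), S)$ yet did not lie in $S$, then the $S$-path joining $u$ to $v$ together with $uv$ would form a cycle in $T_n$, which is impossible. Conversely, because each block of a composition induces a connected subgraph, one recovers $P(S(P)) = P$. I would also check that $P(S)$ is always a legitimate composition, i.e. that each of its blocks induces a connected subgraph: a set of vertices mutually connected through edges of $S \subseteq E(T_n)$ is certainly connected in the induced subgraph $T_n[B]$, so every block qualifies.

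Finally I would count, tracking the number of components through the bijection. Since $(V(T_n), S)$ is a forest on $n$ vertices, it has exactly $n - |S|$ connected components; hence a composition with $k$ components corresponds precisely to a subset $S$ with $|S| = n - k$, or equivalently to a choice of the $k-1$ edges running between distinct blocks. As $|E(T_n)| = n-1$, the number of such choices is
\[
C^k(T_n) = \binom{n-1}{n-k} = \binom{n-1}{k-1},
\]
as claimed. As a sanity check this yields $C^1(T_n) = C^n(T_n) = 1$, consistent with Remark~\ref{rem10}.
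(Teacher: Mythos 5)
Your proof is correct and takes essentially the same approach as the paper: both identify compositions of $T_n$ with subsets of its $n-1$ edges (equivalently, with choices of $k-1$ edges to delete), so that a composition with $k$ components corresponds to removing exactly $k-1$ edges. Your version simply spells out more carefully the verification that this correspondence is a genuine bijection, which the paper leaves implicit.
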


\begin{proof}
It is trivial to notice that the deletion of a single edge from any subgraph of $T_n$ will result in a distinct subgraph which has exactly 1 more component than its original subgraph. Since $T_n$ is connected, we must delete $k-1$ edges to produce a subgraph of $T_n$ which has exactly $k$ components. Hence $C^k(T_n) = \binom{n-1}{k-1}$.
\end{proof}

\begin{theorem}\label{theorem30}
If $G = G_1 \cup G_2$ where $G_1$ and $G_2$ are disjoint, then $C^k(G) = \underset{j=1}{\overset{k-1}{\sum}}\left[C^j(G_1) \cdot C^{k-j}(G_2)\right]$.
\end{theorem}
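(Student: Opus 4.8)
The plan is to establish a bijection between the set $\mathcal{C}^k(G)$ of $k$-component compositions of $G$ and the disjoint union, over $j$ from $1$ to $k-1$, of the product sets $\mathcal{C}^j(G_1) \times \mathcal{C}^{k-j}(G_2)$. The central structural observation is that since $G_1$ and $G_2$ are disjoint, no edge of $G$ joins a vertex of $V(G_1)$ to a vertex of $V(G_2)$; consequently any connected subgraph of $G$ is contained entirely within $G_1$ or entirely within $G_2$. A vertex set meeting both $V(G_1)$ and $V(G_2)$ cannot induce a connected subgraph, since no path can cross from one side to the other.

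First I would take an arbitrary composition $\pi \in \mathcal{C}^k(G)$ and use the observation above to classify each of its $k$ blocks as either a $G_1$-block (contained in $V(G_1)$) or a $G_2$-block (contained in $V(G_2)$). Restricting $\pi$ to $V(G_1)$ then yields a partition of $V(G_1)$ whose blocks each induce connected subgraphs of $G_1$ — that is, a composition of $G_1$ — and likewise for $V(G_2)$. If $j$ denotes the number of $G_1$-blocks, then $G_2$ contributes the remaining $k - j$ blocks, producing an element of $\mathcal{C}^j(G_1) \times \mathcal{C}^{k-j}(G_2)$. Because both $G_1$ and $G_2$ are nonempty, each must contribute at least one block, so $1 \leq j \leq k - 1$, which pins down the range of the summation.

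Conversely, I would verify that any pair consisting of a composition of $G_1$ into $j$ parts and a composition of $G_2$ into $k - j$ parts can be merged by taking the union of their block sets, yielding a partition of $V(G) = V(G_1) \sqcup V(G_2)$ into $k$ blocks, each inducing a connected subgraph of $G$, hence a genuine element of $\mathcal{C}^k(G)$. These two maps are mutually inverse, so the correspondence is a bijection. Applying the product rule to count each set $\mathcal{C}^j(G_1) \times \mathcal{C}^{k-j}(G_2)$ and summing over the admissible values of $j$ then gives $C^k(G) = \sum_{j=1}^{k-1} C^j(G_1) \cdot C^{k-j}(G_2)$.

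The argument is essentially routine, so I do not anticipate a serious obstacle; the single point demanding care is the justification that no block straddles $G_1$ and $G_2$, which is what makes both directions of the bijection well defined. I would also confirm the summation limits directly from the nonemptiness of $G_1$ and $G_2$, ensuring that no degenerate term involving an empty composition of either factor enters the sum.
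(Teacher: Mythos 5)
Your proposal is correct and takes essentially the same approach as the paper: both arguments decompose a $k$-component composition of $G$ according to the number $j$ of blocks lying in $G_1$ and apply the product and sum rules. You simply make explicit the bijection and the observation that no block can straddle $V(G_1)$ and $V(G_2)$, which the paper leaves implicit.
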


\begin{proof}
 Let $k \geq j$ and $C$ be a composition of $G_1$ which has $j$ components. There are exactly $C^{k-j}(G_2)$ compositions of $G$ which contain all components of $C$ and have exactly $k$ components; furthermore, $C$ can be chosen in exactly $C^j(G)$ ways. Hence, $C^k(G) = \underset{j=1}{\overset{k-1}{\sum}}\left[C^j(G_1)\cdot C^{k-j}(G_2)\right]$.
\end{proof}

\begin{corollary}\label{cor7}
Let $G = G_1 \cup G_2$. If $G_1$ and $G_2$ share exactly one vertex, then $C^k(G) = \underset{j=1}{\overset{k}{\sum}}\left[C^j(G_1)\cdot C^{k+1-j}(G_2)\right].$
\end{corollary}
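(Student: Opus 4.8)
The plan is to exploit the fact that $G_1$ and $G_2$ meet in a single vertex, say $v$. Because $v$ is the only common vertex, every edge of $G$ lies entirely in $G_1$ or entirely in $G_2$, so $v$ is a cut vertex separating $V(G_1)\setminus\{v\}$ from $V(G_2)\setminus\{v\}$. I would first record the structural consequence that in any composition of $G$, a block meeting both $V(G_1)\setminus\{v\}$ and $V(G_2)\setminus\{v\}$ must contain $v$, since that block induces a connected subgraph and the only route between the two sides passes through $v$.

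With this in hand, the main step is to build a bijection between $\mathcal{C}^k(G)$ and the set of pairs $(\mathcal{P}_1,\mathcal{P}_2)$ with $\mathcal{P}_1 \in \mathcal{C}^j(G_1)$ and $\mathcal{P}_2 \in \mathcal{C}^{k+1-j}(G_2)$. Given a composition $\mathcal{P}$ of $G$, let $B$ be its block containing $v$; I split $B$ into $B\cap V(G_1)$ and $B\cap V(G_2)$, assign each remaining block to $G_1$ or $G_2$ according to which vertex set contains it, and thereby obtain $\mathcal{P}_1$ and $\mathcal{P}_2$. The inverse operation takes a pair $(\mathcal{P}_1,\mathcal{P}_2)$, locates the $v$-block $B_1$ of $\mathcal{P}_1$ and the $v$-block $B_2$ of $\mathcal{P}_2$, and glues them into the single set $B_1\cup B_2$ while keeping all other blocks fixed.

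Then I would track components: if $\mathcal{P}_1$ has $j$ blocks and $\mathcal{P}_2$ has $m$ blocks, the gluing identifies their two $v$-blocks as one, so $\mathcal{P}$ has $j+m-1$ blocks. Setting $j+m-1=k$ gives $m=k+1-j$, and requiring $j\geq 1$ and $m\geq 1$ forces $1\leq j\leq k$ (values of $j$ exceeding $|V(G_1)|$ are harmless, since $C^j(G_1)=0$ there by Remark \ref{rem10}). Summing the count $C^j(G_1)\cdot C^{k+1-j}(G_2)$ of pairs over $1\leq j\leq k$ then yields the claimed formula.

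The step needing genuine care is verifying that the two halves of the split block remain connected, i.e.\ that $G_1[B\cap V(G_1)]$ and $G_2[B\cap V(G_2)]$ induce connected subgraphs, each containing $v$. This is precisely where the cut-vertex property is essential: if, say, $G_1[B\cap V(G_1)]$ had a component avoiding $v$, those vertices could reach the rest of $B$ only through $v$, contradicting connectivity of $G[B]$. Conversely I must check that gluing two connected subgraphs sharing only $v$ yields a connected subgraph, which is immediate. Establishing these two facts makes the correspondence a genuine bijection and closes the argument.
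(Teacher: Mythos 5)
Your proof is correct and follows essentially the same route as the paper's: the paper splits the vertex $v$ in two to turn each composition of $G$ into a composition of the disjoint union of $G_1$ and $G_2$ with $k+1$ components and then cites Theorem \ref{theorem30}, which is exactly your bijection with pairs $(\mathcal{P}_1,\mathcal{P}_2)$ carried out inline. Your explicit verification that both halves of the split $v$-block remain connected is a detail the paper leaves implicit, and it is a worthwhile addition.
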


\begin{proof}
Let $v$ be the vertex which $V(G_1)$ and $V(G_2)$ share and $C \in \mathcal{C}^k(G)$. If we separate $C$ into 2 disjoint graphs by disconnecting $v$ from all vertices in $V(G_2)$ and adding a new vertex which is adjacent to all of the original neighbors of $v$ in $V(G_2)$, then the result is a composition of $k+1$ components. So $C^k(G)$ is the same as the number of compositions of the disjoint union of $G_1$ and $G_2$ with exactly $k+1$ components; hence, by Theorem \ref{theorem30}, $C^k(G) = \underset{j=1}{\overset{k}{\sum}}\left[C^j(G_1)\cdot C^{k+1-j}(G_2)\right]$.
\end{proof}
~\\
~\\
For the following, if $G$ is a graph and $H$ a subgraph of $G$, then $G^{-H}$ will represent the graph where $V(G^{-H}) = V(G)$ and $E(G^{-H}) = E(G) \backslash E(H)$; additionally, if $C$ is a composition of $G$ and $G_C$ represents the subgraph of $G$ induced by $C$, then $C^{-H}$ will represent the composition
of $G$ which induces $\left(G_{C}\right)^{-H}$.
\begin{theorem}\label{theorem41}
Let $G = G_1 \cup G_2$ and $V(G_1) \cap V(G_2) = \emptyset$. If $G$ has exactly one edge incident to vertices from $G_1$ and $G_2$, then $C^k(G) = \underset{j=1}{\overset{k-1}{\sum}}C^j(G_1) \cdot \left[C^{k+1-j}(G_2) + C^{k-j}(G_2)\right] + C^{k}(G_1)$.
\end{theorem}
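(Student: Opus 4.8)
The plan is to let $e = uv$ denote the unique edge of $G$ joining a vertex $u \in V(G_1)$ to a vertex $v \in V(G_2)$, and then to split the compositions of $G$ into exactly two families according to how $e$ behaves. First I would observe that because $e$ is the \emph{only} edge between the two sides, any part $P$ of a composition that contains a vertex of $V(G_1)$ and a vertex of $V(G_2)$ must, for connectivity, contain both endpoints $u$ and $v$; consequently at most one part can straddle the two sides, and that happens precisely when $u$ and $v$ lie in a common part. This gives a clean dichotomy: either $u$ and $v$ are separated (Type A) or they share a part (Type B).

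For Type A, I would argue that a composition of $G$ in which $u$ and $v$ are separated is the same data as a composition of the disjoint union of $G_1$ and $G_2$ obtained by deleting the bridge $e$ from $G$: no part uses $e$, each part lies entirely inside one side and is connected there, and conversely the extra edge $e$ never destroys connectivity of a part not using it. Restricting to $k$ components, Theorem \ref{theorem30} counts these as $\sum_{j=1}^{k-1} C^j(G_1) \cdot C^{k-j}(G_2)$.

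For Type B, the key step is a cut/merge bijection. Given a $k$-component composition of $G$ whose unique straddling part is $P$, delete $e$ to break $P$ into the connected pieces $P \cap V(G_1)$ (containing $u$) and $P \cap V(G_2)$ (containing $v$); the result is a composition of that same disjoint union with $k+1$ components. This map is reversible---merge the part containing $u$ with the part containing $v$ across $e$---so it is a bijection, and by Theorem \ref{theorem30} the Type B compositions number $\sum_{j=1}^{k} C^j(G_1) \cdot C^{k+1-j}(G_2)$. (Alternatively, contracting $e$ turns Type B compositions into compositions of the one-shared-vertex graph, and Corollary \ref{cor7} gives the same count.)

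Finally I would add the two tallies and rearrange. Splitting off the $j=k$ term of the Type B sum as $C^k(G_1)\cdot C^1(G_2) = C^k(G_1)$ (using $C^1(G_2)=1$ from Remark \ref{rem10}) and combining the two remaining sums over $j = 1, \dots, k-1$ yields the stated expression $\sum_{j=1}^{k-1} C^j(G_1)\bigl[C^{k+1-j}(G_2)+C^{k-j}(G_2)\bigr] + C^k(G_1)$. I expect the main obstacle to be justifying the dichotomy rigorously---namely that a straddling part must capture both bridge endpoints and is therefore unique---since the bijection and the closing algebra are then routine.
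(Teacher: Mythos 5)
Your argument is correct and matches the paper's proof in essence: both split the $k$-component compositions of $G$ according to whether the endpoints of the bridge $e$ lie in a common part, identify the two classes with $(k+1)$- and $k$-component compositions of $G^{-\{e\}} = G_1 \sqcup G_2$ respectively, apply Theorem \ref{theorem30} to each, and split off the $j=k$ term using $C^1(G_2)=1$. Your added justification that a straddling part must contain both endpoints of $e$ and is unique is a welcome bit of rigor the paper leaves implicit.
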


\begin{proof}
Let $e$ be the edge incident to vertices in $G_1$ and $G_2$ and $C \in \mathcal{C}^k(G).$ If the vertices of $e$ in $C$ are connected, then $C^{-\{e\}}$ will have exactly $k+1$ components. Hence, there are exactly $C^{k+1}(G^{-\{e\}})$ compositions of this form. Also, there are $C^k(G^{-\{e\}})$ compositions of $G^{-\{e\}}$ in which the vertices of $e$ are not connected. Theorem \ref{theorem30} yields the results
\begin{align*}
& C^{k+1}(G^{-\{e\}}) = \underset{j=1}{\overset{k}{\sum}}\left[C^j(G_1) \cdot C^{k+1-j}(G_2)\right]
\\
& C^{k}(G^{-\{e\}}) = \underset{j=1}{\overset{k-1}{\sum}}\left[C^j(G_1) \cdot C^{k-j}(G_2)\right].
\end{align*}
\begin{align*}
\text{Hence, }C^k(G) &= C^{k+1}(G^{-\{e\}}) + C^{k}(G^{-\{e\}})
\\
&= \underset{j=1}{\overset{k}{\sum}}\left[C^j(G_1) \cdot C^{k+1-j}(G_2)\right] + \underset{j=1}{\overset{k-1}{\sum}}\left[C^j(G_1) \cdot C^{k-j}(G_2)\right]
\\
&= \underset{j=1}{\overset{k-1}{\sum}}C^j(G_1) \cdot \left[C^{k+1-j}(G_2) + C^{k-j}(G_2)\right] + C^{k}(G_1)\cdot C^1(G_2)
\\
&= \underset{j=1}{\overset{k-1}{\sum}}C^j(G_1) \cdot \left[C^{k+1-j}(G_2) + C^{k-j}(G_2)\right] + C^{k}(G_1).
\end{align*}
\end{proof}
\begin{theorem}\label{theorem32}
$C^k(C_n) = \begin{cases}
          1, & k=1\\
          \binom{n}{k}, & 1 < k \leq n
\end{cases}$

\end{theorem}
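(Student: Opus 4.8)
The plan is to exploit the special structure of the connected subgraphs of a cycle. The key structural observation is that any connected induced subgraph of $C_n$ is either all of $C_n$ or an \emph{arc}, that is, a set of consecutive vertices inducing a path; a part of a composition cannot consist of two or more blocks of vertices separated by gaps, since such a vertex set is disconnected in $C_n$. Consequently every composition of $C_n$ is a partition of its vertices into arcs, and the whole problem reduces to counting such partitions.

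The case $k = 1$ is immediate from Remark \ref{rem10}: the only composition with a single component is the whole vertex set, so $C^1(C_n) = 1$.

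For $1 < k \leq n$ I would establish a bijection between $\mathcal{C}^k(C_n)$ and the $k$-element subsets of $E(C_n)$. Regard the $n$ edges of the cycle as the gaps between cyclically consecutive vertices, and call an edge a \emph{cut} of a composition $C$ if its two endpoints lie in different parts of $C$. First I would show that a composition into $k \geq 2$ arcs has exactly $k$ cut edges: reading the arcs in cyclic order around $C_n$, there is precisely one cut separating each cyclically adjacent pair of arcs, giving $k$ cuts in total. This assigns to each $C \in \mathcal{C}^k(C_n)$ a $k$-element subset of $E(C_n)$. Conversely, deleting any chosen set of $k \geq 2$ edges severs the cycle into exactly $k$ arcs, which constitute a valid composition with $k$ components whose set of cut edges is precisely the chosen set. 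Since these two maps are mutually inverse, $C^k(C_n) = \binom{n}{k}$.

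The step demanding the most care is the equality between the number of cut edges and the number of parts, together with the observation that the bijection breaks down exactly when $k = 1$: deleting a single edge leaves the cycle connected as one arc, corresponding to a one-part composition that has no cut at all, so a $1$-element subset of edges has no preimage. This mismatch is precisely why the case $k = 1$ must be separated out and why the count is $1$ rather than $\binom{n}{1} = n$. As a sanity check, Lemma \ref{lemma1} then yields $C(C_n) = 1 + \sum_{k=2}^{n} \binom{n}{k} = 2^{n} - n$, which agrees with the known composition number of the cycle.
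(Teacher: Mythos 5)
Your proof is correct, and it takes a cleaner route than the paper. The paper argues by deleting a single edge from $C_n$ to obtain $P_n$, invoking Theorem \ref{theorem28} to get $\binom{n-1}{k-1}$ compositions of the resulting path, and then multiplying by the $n$ choices of deleted edge; as written, that argument silently passes from $n\binom{n-1}{k-1}$ to $\binom{n}{k}$, which really requires observing that each composition with $k$ components is produced once for each of its $k$ cut edges, so that the correct count is $\frac{n}{k}\binom{n-1}{k-1}=\binom{n}{k}$. Your argument replaces this implicit double count with an explicit bijection between $\mathcal{C}^k(C_n)$ and the $k$-element subsets of $E(C_n)$ via the set of cut edges, verifying both directions and locating exactly where the correspondence fails for $k=1$ (a one-element edge set has no preimage because deleting one edge leaves a single arc with no cuts). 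What your approach buys is rigor and a transparent explanation of why the $k=1$ case is exceptional; what the paper's approach buys is brevity and a direct reduction to the already-proved tree case. Your closing sanity check via Lemma \ref{lemma1}, giving $C(C_n)=2^{n}-n$, matches the known composition number of the cycle and is a nice confirmation.
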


\begin{proof}
$k=1$ is the trivial case of our theorem and is discussed in Remark \ref{rem10}. Assume that $k > 1$.
Deleting a single edge from $C_n$ yields $P_n$. Theorem \ref{theorem28} yields $C^k(P_n) = \binom{n-1}{k-1}$.
Since there are exactly $n$ ways of choosing the edge deleted, we have $C^k(C_n) = \binom{n}{k}$.
\end{proof}

\section{Connection to Stirling numbers of the second kind}
Recall that the $(n,k)\textsuperscript{th}$ entry in the array of Stirling numbers of the second kind [denoted by $S(n,k)$] represents the number of partitions of a set $S$ of cardinality $n$, where every partition has exactly $k$ non-empty subsets of $S$. A portion of the array appears below.
\\
\\
\begin{center}
\begin{tabular}{c c c c c c c c c c c}
1 &  &  &  &  &  &  &  &  & \\\\
1 & 1 &  &  &  &  &  &  &  &  \\\\
1 & 3 & 1 &  &  &  &  &  &  &  \\\\
1 & 7 & 6 & 1 &  &  &  &  &  &  \\\\
1 & 15 & 25 & 10 & 1 &  &  &  &  &  \\\\
\end{tabular}
\end{center}
\begin{lemma}\label{lemma2}
$C^k(K_n) = S(n,k)$.
\end{lemma}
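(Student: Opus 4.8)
The plan is to argue that the defining constraint on graph compositions becomes vacuous for the complete graph, so that the statement reduces directly to the definition of $S(n,k)$. Recall that a graph composition of $G$ is a partition of $V(G)$ in which every block induces a connected subgraph. The key observation is that in $K_n$ every nonempty subset $W \subseteq V(K_n)$ induces a complete graph $K_{|W|}$, which is connected. Hence the connectivity requirement is automatically satisfied by every block of every partition of $V(K_n)$.

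From this I would conclude that the set of graph compositions of $K_n$ is exactly the set of all partitions of $V(K_n)$, with no restriction. First I would fix an arbitrary partition $\mathcal{P} = \{B_1, \dots, B_k\}$ of $V(K_n)$ into $k$ nonempty blocks and note that each $B_i$ induces a connected subgraph, so $\mathcal{P} \in \mathcal{C}^k(K_n)$. Conversely, any element of $\mathcal{C}^k(K_n)$ is by definition a partition of $V(K_n)$ into $k$ nonempty blocks. This establishes a bijection (in fact an equality) between $\mathcal{C}^k(K_n)$ and the collection of set partitions of an $n$-element set into exactly $k$ nonempty subsets.

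Since $S(n,k)$ is defined precisely as the number of partitions of a set of cardinality $n$ into $k$ nonempty blocks, the counts coincide and $C^k(K_n) = S(n,k)$. As a consistency check I would note that summing over $k$ via Lemma \ref{lemma1} recovers $C(K_n) = \sum_{k=1}^{n} S(n,k)$, the $n$th Bell number, which matches the result of Knopfmacher and Mays quoted in the introduction for $K_n$.

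Honestly, I expect no genuine obstacle here: the entire content is the remark that all induced subgraphs of $K_n$ on nonempty vertex sets are connected, after which the claim is immediate from the definition of the Stirling numbers of the second kind. The only thing requiring any care is stating the bijection cleanly enough that the equality $\mathcal{C}^k(K_n) = \{\text{partitions of } V(K_n) \text{ into } k \text{ blocks}\}$ is transparent rather than merely asserted.
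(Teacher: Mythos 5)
Your proof is correct and is essentially the paper's argument: the paper simply asserts that the claim follows from the definition of $S(n,k)$, and you have spelled out the one observation it leaves implicit, namely that every nonempty vertex subset of $K_n$ induces a connected subgraph, so compositions of $K_n$ coincide with set partitions.
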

\begin{proof}
A quick examination of the definition of $S(n,k)$ above shows that Lemma \ref{lemma2} holds.
\end{proof}
\begin{remark}\label{rem9}
Theorem \ref{theorem28} and Lemma \ref{lemma2} are extreme cases for any connected graph $G$ such that $|V(G)|  = n$.  Hence, it is easily observed that $\binom{n-1}{k-1} \leq C^k(G) \leq S(n,k)$ for any connected $G$ such that $|V(G)| = n$.
\end{remark}

For all that follows, a component $G_i$ of a graph composition of $K_n^{-G}$ will be referred to as \emph{bad} if $V(G_i) \subseteq V(G)$ and the complement of $G$ restricted to $G_i$ is disconnected; otherwise, it will be referred to as a \emph{good} component of $K_n^{-G}$.
\begin{theorem}\label{theorem34}
Let $G$ be a graph with $n \leq N$ vertices.  If $b_{j,m,n}$ represents the number of ways of choosing $m$ disjoint bad components of $K_N^{-G}$ such that the cardinality of the union of vertices of all bad components is $j$, then $C^k(K_N^{-G}) = \underset{j=0}{\overset{n}{\sum}}\underset{m=0}{\overset{j}{\sum}}(-1)^mb_{j,m,n}\cdot S(N-j,k-m)$.
\end{theorem}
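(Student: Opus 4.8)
The plan is to express $C^k(K_N^{-G})$ as a count of ordinary set partitions subject to a forbidden-block condition, and then to strip off the forbidden blocks by inclusion--exclusion. First I would record the basic dichotomy governing a single block $S$ of a partition of $V(K_N)$: if $S$ contains a vertex $v \notin V(G)$, then $v$ keeps all of its edges in $K_N^{-G}$ and is adjacent to every other vertex of $S$, so $S$ induces a connected subgraph automatically; and if $S \subseteq V(G)$, then the subgraph of $K_N^{-G}$ induced on $S$ is exactly the complement of $G$ restricted to $S$. Hence a block fails to induce a connected subgraph precisely when it is \emph{bad} in the sense defined above, and therefore $C^k(K_N^{-G})$ is the number of partitions of the $N$ vertices into $k$ nonempty blocks that contain \emph{no} bad block.

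The second step is the inclusion--exclusion itself. Let $\Omega$ denote the set of all partitions of the $N$ vertices into exactly $k$ nonempty blocks, and for $P \in \Omega$ let $\mathrm{bad}(P)$ be the collection of blocks of $P$ that are bad; the desired quantity is $\sum_{P \in \Omega}[\,\mathrm{bad}(P) = \emptyset\,]$. I would apply the elementary identity $[\,\mathrm{bad}(P) = \emptyset\,] = \sum_{A \subseteq \mathrm{bad}(P)}(-1)^{|A|}$, interchange the order of summation, and let the outer sum range over collections $A$ of pairwise disjoint bad sets (distinct blocks of a partition are automatically disjoint) with the inner sum counting those $P \in \Omega$ that have every member of $A$ as a block.

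The final counting step is direct. If $A$ consists of $m$ disjoint bad sets whose vertices total $j$, then a partition in $\Omega$ containing all of $A$ as blocks is the same datum as an arbitrary partition of the remaining $N-j$ vertices into the remaining $k-m$ blocks, of which there are $S(N-j,k-m)$. Collecting terms by the pair $(j,m)$, the number of admissible collections $A$ with these parameters is exactly $b_{j,m,n}$, yielding $\sum_{j}\sum_{m}(-1)^m b_{j,m,n}\,S(N-j,k-m)$. I would then pin down the ranges: every bad set lies in $V(G)$, so $0 \le j \le n$; a bad set has at least two vertices, so $m \le j$ (with $b_{j,m,n}=0$ once $m > j/2$); and the term $j=m=0$ recovers the unrestricted count $S(N,k)$ via $b_{0,0,n}=1$.

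I expect the crux to be the middle step: arranging the inclusion--exclusion so that the alternating signs and the re-indexing into $b_{j,m,n}$ are manifestly correct, in particular verifying that summing $(-1)^{|A|}$ over subsets $A$ of the bad blocks really does isolate the partitions with no bad block. By contrast, the single-block dichotomy and the substitution of $S(N-j,k-m)$ are routine once that bookkeeping is in place.
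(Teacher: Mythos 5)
Your proposal is correct and follows essentially the same route as the paper: the paper likewise writes $C^k(K_N^{-G})$ as a sum over all $k$-block partitions of an alternating sum $\sum_{m=0}^{B(\gamma)}(-1)^m\binom{B(\gamma)}{m}$ over choices of bad blocks (your $\sum_{A\subseteq\mathrm{bad}(P)}(-1)^{|A|}$), then interchanges summation and counts the pairs with parameters $(j,m)$ by $b_{j,m,n}\cdot S(N-j,k-m)$. Your explicit verification of the single-block dichotomy and of the bijection with partitions of the remaining $N-j$ vertices fills in details the paper leaves implicit, but the argument is the same.
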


\begin{proof}
\noindent We begin by denoting the set of all compositions of $K_N$ with exactly $k$ components by $\Gamma$. If $B(\gamma)$ denotes
the number of disjoint bad components contained in $\gamma$ for all $\gamma \in \Gamma$, then
\[
C^k(K_N^{-G}) = \underset{B(\gamma) = 0}{\underset{\gamma \in \Gamma}{\sum}}1 =  \underset{B(\gamma) = 0}{\underset{\gamma \in \Gamma}{\sum}}1 +  \underset{B(\gamma) \neq 0}{\underset{\gamma \in \Gamma}{\sum}}0 =  \underset{\gamma \in \Gamma}{\sum}\underset{m=0}{\overset{B(\gamma)}{\sum}}(-1)^m\binom{B(\gamma)}{m}
\]
since the alternating sum of the $m^{th}$ row of Pascal's Triangle is $0$ for $B(\lambda) > 0$ and $1$ for $B(\lambda)=0$. Note that $\binom{B(\gamma)}{m}$ will count the number of times $\gamma$ is counted as a composition of $K_N$ with at least $m$ disjoint \emph{bad} components for a fixed $\gamma$ and $m$.
\\
$$\underset{\gamma \in \Gamma}{\sum}\underset{m=0}{\overset{B(\gamma)}{\sum}}\binom{B(\gamma)}{m} = \underset{j=0}{\overset{n}{\sum}}\underset{m=0}{\overset{n}{\sum}}b_{j,m,n}S(N-j, k-m).$$
\\
So
\begin{align*}
C^k(K_N^{-G}) &= \underset{\gamma \in \Gamma}{\sum}\underset{m=0}{\overset{B(\gamma)}{\sum}}(-1)^m \binom{B(\gamma)}{m}
\\
&= \underset{j=0}{\overset{n}{\sum}}\underset{m=0}{\overset{n}{\sum}}(-1)^mb_{j,m,n}S(N-j, k-m).
\end{align*}
\end{proof}
\begin{remark}\label{rem11}
It should be noted that:
\begin{enumerate}[label = (\roman*)]
\item $b_{j,m,n} = 0$ if $j > n$, $m > j$, $m > n$, or $j > m = 0$ (restrictions set by the number of vertices present in $G$ and the number of components being chosen).
\\
\item $b_{j,m,n} = 1$ if $j = m = 0$ (since there's only one way to choose no bad components (or vertices) of $G$).
\\
\item $b_{1,m,n} = 0$ (since there's no way to choose bad components from $G$ with only one vertex).
\end{enumerate}
\end{remark}
\section{Deletions of specific graphs From complete graphs}
\begin{theorem}\label{theorem35}
If $p_{j,m,n}$ represents the number of ways of choosing $m$ disjoint bad components of $K_N^{-P_n}$ such that the cardinality of the union of vertices of all components is $j$, then $C^k(K_N^{-P_n})= \underset{j=0}{\overset{n}{\sum}}\underset{m=0}{\overset{j}{\sum}}(-1)^mp_{j,m,n}\cdot S(N-j,k-m)$ and $p_{j,m,n} = p_{j,m,n-1} + p_{j-2,m-1,n-2} + p_{j-3,m-1,n-3}$ for $j \geq 3$, $m \geq 1$, and $n \geq 3$.
\end{theorem}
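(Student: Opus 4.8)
The plan is to treat the two assertions separately. The first---the closed form for $C^k(K_N^{-P_n})$---requires no new work: it is precisely Theorem~\ref{theorem34} specialized to $G = P_n$, with $p_{j,m,n}$ playing the role of $b_{j,m,n}$. All of the substance lies in the recurrence, and the crucial preliminary step is to determine exactly which subsets of $V(P_n)$ are bad. Label the path vertices $1,2,\ldots,n$ with edges $\{i,i+1\}$, and note that for $S \subseteq V(P_n)$ the subgraph of $K_N^{-P_n}$ induced on $S$ is exactly the complement of $P_n$ restricted to $S$, namely $\overline{P_n[S]}$; thus $S$ is a bad component precisely when $\overline{P_n[S]}$ is disconnected.

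The key claim is that $\overline{P_n[S]}$ is disconnected if and only if $S$ consists of exactly two or exactly three consecutive vertices of the path. To prove it, I would write $P_n[S]$ as the disjoint union of its maximal runs of consecutive vertices $R_1,\ldots,R_t$ and use the fact that the complement of a disjoint union is the join of the complements. If $t \geq 2$, then $\overline{P_n[S]}$ is a join of nonempty graphs and is therefore connected, so a bad $S$ must be a single run, i.e.\ $P_n[S] \cong P_{|S|}$. A direct check then shows that $\overline{P_{|S|}}$ is disconnected exactly when $|S| \in \{2,3\}$ (two isolated vertices when $|S|=2$; an edge plus an isolated vertex when $|S|=3$; connected for $|S| \geq 4$, and connected for $|S|=1$). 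Hence the bad sets are precisely the dominoes $\{i,i+1\}$ and the trominoes $\{i,i+1,i+2\}$.

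With the bad sets identified, the recurrence follows by conditioning on the rightmost vertex $n$. Given any choice of $m$ disjoint bad sets covering $j$ vertices, either $n$ is uncovered---in which case all chosen sets lie in $\{1,\ldots,n-1\}$, contributing $p_{j,m,n-1}$---or $n$ is covered, and then the bad set containing it must abut the right end of the path. That set is therefore either the domino $\{n-1,n\}$, leaving $m-1$ disjoint bad sets on $j-2$ vertices inside $\{1,\ldots,n-2\}$ and contributing $p_{j-2,m-1,n-2}$, or the tromino $\{n-2,n-1,n\}$, contributing $p_{j-3,m-1,n-3}$. These three cases are mutually exclusive and exhaustive, so summing them gives $p_{j,m,n} = p_{j,m,n-1} + p_{j-2,m-1,n-2} + p_{j-3,m-1,n-3}$ for $j \geq 3$, $m \geq 1$, $n \geq 3$; the degenerate boundary values (for instance when $n=3$ forces the remaining sets into an empty segment) are governed by Remark~\ref{rem11}.

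The main obstacle is the characterization claim: once the join observation eliminates every subset with two or more runs, the rest is routine bookkeeping. The subtlety worth stating carefully is why a subset spread across several runs can never be bad---its complement is a join and hence connected---so that only single intervals of length $2$ or $3$ survive.
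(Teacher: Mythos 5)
Your proof is correct and follows essentially the same route as the paper: apply Theorem~\ref{theorem34} to $G=P_n$, and derive the recurrence by conditioning on whether a terminal vertex of the path lies in no bad component, a $2$-vertex bad component, or a $3$-vertex bad component. The only difference is that you prove the characterization of the bad components (via the join-of-complements argument) where the paper simply cites \cite{TM}, which makes your version a bit more self-contained but does not change the argument.
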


\begin{proof}
$C^k(K_N^{-P_n}) = \underset{j=0}{\overset{n}{\sum}}\underset{m=0}{\overset{j}{\sum}}(-1)^mp_{j,m,n}\cdot S(N-j,k-m)$ is a result of application of Theorem \ref{theorem34} to $K_N^{-P_n}$. If $C$ is a composition of $K_N$, then $C$ will \textbf{not} be a
composition of $K_N^{-P_n}$ if and only if $C$ contains a bad component of $K_N^{-P_n}$. It is noted in \cite{TM} that the deletion of any subpath of $P_n$ of length $t \geq 3$ from $K_N$ yields a connected component; hence, all bad components of $K_N^{-P_n}$ are either single edges or subpaths of length 2.
\\
\\
\indent If $u$ represents one of the terminal vertices of $P_n$ and $\mathcal{C}$ is a component which contains $u$, then one of three cases must occur:
\\
\begin{enumerate}[label=(\roman*)]
\item $\mathcal{C}$ is not a bad component. There are $p_{j,m,n-1}$ ways of choosing $m$ disjoint bad components from $V(P_n)$ such that the cardinality of the union of vertices of all components which include $\mathcal{C}$ is $j$.
    \\
\item $\mathcal{C}$ is a single edge bad component. There are $p_{j-2,m-1,n-2}$ ways of choosing $m$ disjoint bad components from $V(P_n)$  such that the cardinality of union of vertices of components which include $\mathcal{C}$ is $j$.
    \\
\item $\mathcal{C}$ is a 3 element bad component. There are $p_{j-3,m-1,n-3}$ ways of choosing $m$ disjoint bad components from $V(P_n)$  such that the cardinality of union of vertices of components which include $\mathcal{C}$ is $j$.
\end{enumerate}
The above ``world encompassing" cases yield the result
$$p_{j,m,n} = p_{j,m,n-1} + p_{j-2,m-1,n-2} + p_{j-3,m-1,n-3}.$$
\end{proof}

\begin{theorem}\label{theorem39}
If $F^k(x,y,z) = \underset{j=0}{\overset{\infty}{\sum}}\underset{m=0}{\overset{\infty}{\sum}}\underset{n=0}{\overset{\infty}{\sum}}(-1)^mp_{j,m,n}x^ny^mz^j$,
\\
then $F^k(x,y,z) = \frac{1}{1 - x + x^2yz^2 + x^3yz^3}$.
\end{theorem}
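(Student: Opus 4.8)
The plan is to recast the claim as the single functional identity $(1 - x + x^2 y z^2 + x^3 y z^3)\,F^k(x,y,z) = 1$ and to verify it by comparing coefficients. First I would multiply out the product and reindex each of the four resulting sums so that every term carries a common monomial $x^n y^m z^j$, adopting throughout the convention that $p_{j,m,n} = 0$ whenever any subscript is negative. The $+x^2 y z^2$ and $+x^3 y z^3$ factors each raise the $y$-exponent by one, so reading off the coefficient of $y^m$ forces the summation index down to $m-1$, where $F^k$ carries the sign $(-1)^{m-1} = -(-1)^m$. Collecting the four contributions, the coefficient of $x^n y^m z^j$ in $(1 - x + x^2 y z^2 + x^3 y z^3)F^k$ becomes
\[
(-1)^m\bigl[\,p_{j,m,n} - p_{j,m,n-1} - p_{j-2,m-1,n-2} - p_{j-3,m-1,n-3}\,\bigr].
\]

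Next I would invoke Theorem \ref{theorem35}: for $j \geq 3$, $m \geq 1$, $n \geq 3$ the bracket is precisely the recurrence defect and therefore vanishes. The real content is to show the bracket also vanishes at every boundary index with $n \geq 1$. To do this I would re-run the ``which component contains the terminal vertex $u$'' case analysis that underlies Theorem \ref{theorem35}, but now for an arbitrary $P_n$ with $n \geq 1$: if $u$ lies in no bad component we descend to $P_{n-1}$ (term $p_{j,m,n-1}$); if $u$ heads a single-edge bad component we descend to $P_{n-2}$ (term $p_{j-2,m-1,n-2}$); and if $u$ heads a $3$-vertex bad component we descend to $P_{n-3}$ (term $p_{j-3,m-1,n-3}$). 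Because the zero-convention automatically annihilates the geometrically impossible cases — forming a $2$- or $3$-tile when too few vertices remain, or decrementing $m$ below $0$ — this confirms the recurrence, and hence the vanishing of the bracket, for every $n \geq 1$ and all $j,m \geq 0$.

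Finally I would dispatch $n = 0$ on its own: by Remark \ref{rem11} the only nonzero value there is $p_{0,0,0} = 1$, while each shifted term in the bracket has a strictly negative $n$-subscript and so dies, leaving the contribution $1$ at the monomial $x^0 y^0 z^0$ and $0$ at every other $n=0$ monomial. Combining the two regimes, the product $(1 - x + x^2 y z^2 + x^3 y z^3)F^k$ has coefficient $1$ at the constant term and $0$ everywhere else; that is, it equals the constant power series $1$, and dividing yields the asserted closed form.

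I expect the genuine obstacle to be boundary bookkeeping rather than algebra, since Theorem \ref{theorem35} only asserts its recurrence for $j \geq 3$, $m \geq 1$, $n \geq 3$, so I must independently certify the defect at small $j,m,n$ and confirm that no stray boundary term leaks into the constant coefficient. As an independent sanity check I would note the direct combinatorial reading: $p_{j,m,n}$ counts sequences of disjoint length-$2$ and length-$3$ tiles on $P_n$, so decomposing such a configuration left-to-right into ``atoms'' — a free vertex of signed weight $x$, a $2$-tile of signed weight $-x^2 y z^2$, or a $3$-tile of signed weight $-x^3 y z^3$, the $-$ signs encoding the factor $(-1)^m$ — exhibits $F^k$ as the geometric series $\tfrac{1}{1-A}$ in the single-atom weight $A = x - x^2 y z^2 - x^3 y z^3$, which is exactly $\tfrac{1}{1 - x + x^2 y z^2 + x^3 y z^3}$.
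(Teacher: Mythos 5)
Your proof is correct, but your primary argument runs along a genuinely different track from the paper's. The paper establishes Theorem \ref{theorem39} directly by the decomposition you relegate to a closing sanity check: every admissible configuration on $P_n$ is read as a sequence of atoms --- a free vertex of weight $x$, a $2$-tile of weight $-x^2yz^2$, or a $3$-tile of weight $-x^3yz^3$ --- so that $F^k$ is the geometric series $\sum_{m\geq 0}\left(x - x^2yz^2 - x^3yz^3\right)^m = \left(1-x+x^2yz^2+x^3yz^3\right)^{-1}$ (note that the summation index in the paper's geometric series is really the number of atoms in the sequence, not the number $m$ of bad components from the triple sum, a point the paper glosses over). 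Your main route instead multiplies $F^k$ by $1-x+x^2yz^2+x^3yz^3$, extracts the coefficient of $x^ny^mz^j$ as $(-1)^m$ times the recurrence defect, and kills it using Theorem \ref{theorem35} together with a boundary analysis. What your approach buys is rigor precisely where the statement of Theorem \ref{theorem35} is silent: the recurrence is only asserted there for $j,n\geq 3$ and $m\geq 1$, and you correctly identify and discharge the obligation to verify the vanishing of the defect at small indices under the zero convention for negative subscripts, isolating the constant term $p_{0,0,0}=1$ that produces the numerator. What the paper's approach buys is brevity and a transparent combinatorial explanation of why the denominator has the form it does; your final paragraph recovers exactly that argument, so the two routes corroborate each other.
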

\begin{proof}
$F^k(x,y,z) = \underset{j=0}{\overset{\infty}{\sum}}\underset{m=0}{\overset{\infty}{\sum}}\underset{n=0}{\overset{\infty}{\sum}}(-1)^mp_{j,m,n}x^ny^mz^j
 = \underset{j=0}{\overset{\infty}{\sum}}\underset{m=0}{\overset{\infty}{\sum}}\underset{n=0}{\overset{\infty}{\sum}}p_{j,m,n}x^n(-y)^mz^j$.
 \\
 \\
As stated in Theorem \ref{theorem35}, $p_{j,m,n}$ is the number of ways of choosing $m$ disjoint bad components of $K_N^{-P_n}$ where the cardinality of bad vertices is $j$. Every component of a composition of $K_N^{-P_n}$ with exactly $m$ bad components is either:
\\
\begin{enumerate}[label= (\roman*)]
\item A good component - represented by $x$.
\\
\item A 2 element bad component- represented by $x^2(-y)z^2.$
\\
\item A 3 element bad component- represented by $x^3(-y)z^3.$
\end{enumerate}
~\\
~\\
$(x - x^2yz^2 - x^3yz^3)^m$ represents the number of ways one can choose at most $m$ disjoint bad components of a composition of $K_N^{-P_n}$. $F^k(x,y,z)$ contains coefficients for all $m$, so

\begin{align*}
F^k(x,y,z) &= \underset{m=0}{\overset{\infty}{\sum}}(x - (x^2yz^2 + x^3yz^3))^m
\\
&= \frac{1}{1 - x+x^2yz^2 + x^3yz^3}.
\end{align*}
\end{proof}

\begin{theorem}\label{theorem36}
If $p_{j,m,n}$ is defined as in Theorem \ref{theorem35} and $c_{j,m,n}$ represents the number of ways of choosing $m$ disjoint \textit{bad} components of $K_N^{-C_n}$ where the number of vertices of the \textit{bad} components is $j$, then $C^k(K_N^{-C_n}) = \underset{j=0}{\overset{n}{\sum}}\underset{m=0}{\overset{j}{\sum}}(-1)^mc_{j,m,n}\cdot S(N-j,k-m)$ and $c_{j,m,n} = p_{j,m,n-1} + 2 \cdot p_{j-2,m-1,n-2} + 3 \cdot p_{j-3,m-1,n-3}$ for $j$ and $n \geq 3$, and $n \not =j$ for $n \in \{3,4\}$.
\end{theorem}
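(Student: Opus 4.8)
The plan is to handle the two assertions separately. The closed form
$C^k(K_N^{-C_n}) = \sum_{j=0}^{n}\sum_{m=0}^{j}(-1)^m c_{j,m,n}\,S(N-j,k-m)$
requires no new work: taking $G = C_n$ in Theorem \ref{theorem34}, the quantity $c_{j,m,n}$ is by definition exactly the $b_{j,m,n}$ of that theorem for the cycle, so the formula is just Theorem \ref{theorem34} specialized to $C_n$. The real content is the recurrence for $c_{j,m,n}$, and everything there rests on first pinning down which subsets of $V(C_n)$ are bad components.

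First I would characterize the bad components of $K_N^{-C_n}$. A set $S \subseteq V(C_n)$ is bad exactly when $\overline{C_n}[S]$ is disconnected, say $S = A \sqcup B$ with no edge of $\overline{C_n}$ joining $A$ and $B$; this means every cross pair is adjacent in $C_n$. Since each vertex of $C_n$ has only two neighbors, fixing $a \in A$ forces $B \subseteq \{a-1,a+1\}$ and symmetrically $A \subseteq \{b-1,b+1\}$, so $|A|,|B|\le 2$ and $|S|\le 4$. Walking through the cases, the bad sets are consecutive pairs (size $2$), consecutive triples (size $3$), and — only when the $|A|=|B|=2$ split is realizable, i.e. $n=4$ — the full vertex set. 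Hence for $n \ge 5$ the bad components are precisely the $n$ cycle-edges and the $n$ consecutive triples, exactly as in the path case of Theorem \ref{theorem35} except that cyclic symmetry yields $n$ triples in place of $n-2$.

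With this in hand I would obtain the recurrence by fixing a single vertex $v$ and splitting the selections of $m$ disjoint bad components of total size $j$ according to the component containing $v$:
\begin{enumerate}[label=(\roman*)]
\item $v$ lies in no chosen bad component. Deleting $v$ turns $C_n$ into the path $P_{n-1}$ on the remaining vertices, and the surviving bad components are exactly the (non-wrapping) bad components of that path, contributing $p_{j,m,n-1}$.
\item $v$ lies in an edge component. There are two cycle-edges at $v$; each consumes two vertices and leaves a path $P_{n-2}$, contributing $2\,p_{j-2,m-1,n-2}$.
\item $v$ lies in a triple component. There are three consecutive triples through $v$ (with $v$ as left, middle, or right vertex); each consumes three vertices and leaves a path $P_{n-3}$, contributing $3\,p_{j-3,m-1,n-3}$.
\end{enumerate}
Summing the three disjoint cases gives $c_{j,m,n} = p_{j,m,n-1} + 2\,p_{j-2,m-1,n-2} + 3\,p_{j-3,m-1,n-3}$.

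The hard part will be the small-$n$ bookkeeping that forces the hypotheses $n \ge 3$ and $n \ne j$ for $n \in \{3,4\}$. For $n=4$ the complement $\overline{C_4}$ is a pair of disjoint edges, so the entire vertex set is itself a bad component of size $4$; this component is invisible to the edge/triple decomposition driving cases (i)--(iii), so the recurrence can hold only when $j \ne 4$. For $n=3$ the three rotated ``consecutive triples'' collapse to the single set $\{1,2,3\}$, so case (iii) counts it thrice and the recurrence overshoots unless $j \ne 3$. I would therefore verify that away from these two exceptional $(n,j)$ pairs the case split is exact — in particular that for $n \ge 5$ the two edges and three triples through $v$ are genuinely distinct, and that deleting their covered vertices always leaves an honest path whose bad components are enumerated by the relevant $p_{\,\cdot,\cdot,\cdot}$.
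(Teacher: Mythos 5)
Your proposal is correct and follows essentially the same route as the paper: the closed form is Theorem \ref{theorem34} specialized to $C_n$, and the recurrence comes from fixing a vertex and casing on whether its component is good, a $2$-element bad component, or a $3$-element bad component, with the $n=j\in\{3,4\}$ exclusions accounting for the whole-cycle bad components. The only difference is cosmetic: you derive the classification of bad components of $K_N^{-C_n}$ from the structure of $\overline{C_n}$ directly, whereas the paper cites \cite{TM} for it, and you spell out the exceptional small cases a bit more explicitly.
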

\begin{proof}$C^k(K_N^{-C_n}) = \underset{j=0}{\overset{n}{\sum}}\underset{m=0}{\overset{j}{\sum}}(-1)^mc_{j,m,n}\cdot S(N-j,k-m)$ follows from the application of Theorem \ref{theorem34} to $K_N^{-C_n}$. If $C$ is a composition of $K_N$, then $C$ will \textbf{not} be a composition of $K_N^{-C_n}$ if and only if $C$ contains a bad component of $K_N^{-C_n}$. It is noted in \cite{TM} that $K_N^{-C_n}$ will have bad components that are either single edges, subpaths of length 2, or cycles of length 3 or 4.
\\
~\\
\indent If $w \in V(C_n)$ and $\mathcal{C}$ is a component that contains $w$, then one of four cases occurs:
\\
\begin{enumerate}[label=(\roman*)]
 \item $\mathcal{C}$ is not a bad component. There are $p_{j,m,n-1}$ ways of choosing $m$ disjoint bad components from $V(C_n)$ (with cardinality of the union of vertices $j$) which do not include $\mathcal{C}$.
     \\
\item $\mathcal{C}$ is a 2 element bad component. There are 2$\cdot p_{j-2,m-1,n-2}$ ways of choosing $m$ disjoint bad components from $V(C_n)$ (with cardinality of the union of vertices $j$) which include $\mathcal{C}$.
    \\
\item $\mathcal{C}$ is a 3 element bad component. If $\mathcal{C}$ is a path ($n > 3$), then there are 3$\cdot p_{j-3,m-1,n-3}$ ways of choosing $m$ disjoint bad components from $V(C_n)$ (with cardinality of the union of vertices $j$) which include $\mathcal{C}$. If $\mathcal{C}$ is a cycle ($n = 3$), then there is exactly one way of choosing 1 bad component.
    \\
\item $\mathcal{C}$ is a 4 element bad component. There is exactly one way of choosing 1 bad component.
\end{enumerate}
~\\
\indent The above ``world encompassing" cases give us
\begin{center}
$c_{j,m,n} = p_{j,m,n-1} + 2\cdot p_{j-2,m-1,n-2} + 3\cdot p_{j-3,m-1,n-3}$.
\end{center}
for $j$ and $n \geq 3$ and $n \not=j \in\{3,4\}$.
\end{proof}

\begin{theorem}\label{theorem40}
If $G^k(x,y,z) = \underset{j=0}{\overset{\infty}{\sum}}\underset{m=0}{\overset{\infty}{\sum}}\underset{n=0}{\overset{\infty}{\sum}}(-1)^mc_{j,m,n}x^ny^mz^j$, then $G^k(x,y,z) =1 + x^2yz^2 + 2x^3yz^3 - x^4yz^4 + \frac{x - 2x^2yz^2 - 3x^3yz^3}{1-x+x^2yz^2+x^3yz^3}$.
\end{theorem}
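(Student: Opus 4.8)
The plan is to feed the recurrence of Theorem~\ref{theorem36} into the definition of $G^k$, recognize the bulk of the resulting sum as a monomial multiple of the series $F^k$ computed in Theorem~\ref{theorem39}, and then repair the finitely many boundary terms on which the recurrence does not hold. Write $R_{j,m,n} = p_{j,m,n-1} + 2p_{j-2,m-1,n-2} + 3p_{j-3,m-1,n-3}$, with the convention that $p_{j,m,n}=0$ whenever an index is negative, and set
\[
\tilde{G}(x,y,z) = \sum_{j,m,n \geq 0}(-1)^m R_{j,m,n}\, x^n y^m z^j .
\]
By Theorem~\ref{theorem36} we have $c_{j,m,n}=R_{j,m,n}$ for all $j,n\geq 3$ except $(n,j)\in\{(3,3),(4,4)\}$, so $G^k$ and $\tilde G$ agree away from a small set of indices.

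First I would evaluate $\tilde G$. Re-indexing its three pieces by $n\mapsto n+1$, by $(j,m,n)\mapsto(j+2,m+1,n+2)$, and by $(j,m,n)\mapsto(j+3,m+1,n+3)$ respectively, and absorbing the extra factor $(-1)^{m+1}=-(-1)^m$ in the last two pieces, collapses each piece into a monomial times $\sum (-1)^m p_{j,m,n}x^ny^mz^j = F^k$. Invoking Theorem~\ref{theorem39} then gives
\[
\tilde{G}(x,y,z) = (x - 2x^2yz^2 - 3x^3yz^3)\,F^k(x,y,z) = \frac{x - 2x^2yz^2 - 3x^3yz^3}{1 - x + x^2yz^2 + x^3yz^3},
\]
which is exactly the non-polynomial part of the claimed formula.

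It then remains to compute the correction $G^k-\tilde G=\sum(-1)^m(c_{j,m,n}-R_{j,m,n})x^ny^mz^j$, a finite sum supported on the indices where the recurrence either fails or was never asserted. I would check each offending index directly against the list of bad components of $K_N^{-C_n}$ (single edges, $2$-paths, and the whole cycle when $n\in\{3,4\}$). The expected nonzero contributions are: $(j,m,n)=(0,0,0)$, where $c_{0,0,0}=1$ but $R_{0,0,0}=0$, giving $+1$; $(2,1,2)$, where the lone edge of $C_2$ gives $c_{2,1,2}=1$ while $R_{2,1,2}=2p_{0,0,0}=2$ (the recurrence imagines two edges through the fixed vertex), giving $+x^2yz^2$; $(3,1,3)$, where the single triangular bad component gives $c_{3,1,3}=1$ while $R_{3,1,3}=3p_{0,0,0}=3$ (the recurrence treats the triangle as three rotated $2$-paths), giving $+2x^3yz^3$; and $(4,1,4)$, where the whole $4$-cycle gives $c_{4,1,4}=1$ while $R_{4,1,4}=0$ (the recurrence, built from deleting a vertex and tiling a path with blocks of size at most $3$, never produces a $4$-vertex bad component), giving $-x^4yz^4$. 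One must also verify that every remaining boundary index satisfies $c_{j,m,n}=R_{j,m,n}$ and so contributes nothing: the leftover $n\leq 2$ terms, the columns $j\in\{0,1,2\}$ for $n\geq 3$ (in particular $c_{2,1,n}=n$ matched by $R_{2,1,n}=(n-2)+2$), and $(4,2,4)$, where the two perfect matchings of $C_4$ are already counted correctly by $2p_{2,1,2}$. Adding the four surviving corrections to $\tilde G$ produces precisely $1+x^2yz^2+2x^3yz^3-x^4yz^4$ plus the fraction above.

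The main obstacle is the bookkeeping in this last step: the recurrence of Theorem~\ref{theorem36} silently mishandles exactly the configurations containing a whole-cycle bad component (the triangle in $C_3$, the square in $C_4$) together with the degenerate small cases $n\leq 2$, and pinning down the correct sign and multiplicity of $c_{j,m,n}-R_{j,m,n}$ at these few spots is where all the real content lies. Once those four discrepancies are nailed down, the rest is routine re-indexing and an appeal to Theorem~\ref{theorem39}.
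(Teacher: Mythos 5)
Your proposal is correct, and its overall shape matches the paper's: both write $G^k$ as the rational function $\frac{x-2x^2yz^2-3x^3yz^3}{1-x+x^2yz^2+x^3yz^3}$ plus a finite correction polynomial supported on exactly the four indices $(j,m,n)=(0,0,0),(2,1,2),(3,1,3),(4,1,4)$, and your correction values ($+1$, $+x^2yz^2$, $+2x^3yz^3$, $-x^4yz^4$) agree with the paper's table of $(-1)^mc_{j,m,n}-h_{j,m,n}$. The one genuine difference is how the rational part is obtained. The paper re-runs the combinatorial product argument of Theorem \ref{theorem39} from scratch for $C_n$: it classifies components as good, $2$-element bad, $3$-element bad, or $4$-element bad, forms the product $(x-2x^2yz^2-3x^3yz^3)(x-x^2yz^2-x^3yz^3)^m$, and sums over $m$. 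You instead substitute the recurrence $c_{j,m,n}=p_{j,m,n-1}+2p_{j-2,m-1,n-2}+3p_{j-3,m-1,n-3}$ of Theorem \ref{theorem36} into the defining series, re-index each piece to pull out $F^k$, and quote Theorem \ref{theorem39}. Your route is more mechanical and, frankly, tighter: it makes the dependence on Theorems \ref{theorem36} and \ref{theorem39} explicit rather than repeating an informal counting argument, and it forces you to audit every index where the recurrence is not asserted (your checks of $c_{2,1,n}$, $(4,2,4)$, and the $n\le 2$ cases are exactly the verifications the paper leaves implicit). What the paper's version buys is self-containedness and a direct combinatorial reading of the factor $x-2x^2yz^2-3x^3yz^3$; what yours buys is rigor in the boundary bookkeeping, which is where all the content of this theorem actually sits.
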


\begin{proof}
We begin by calculating $G^k(x,y,z)$ for $n\not=j\in\{0,2,3,4\}$ and calling it $H^k(x,y,z)$. As in Theorem \ref{theorem36}, $c_{j,m,n}$ represents the number of ways of choosing $m$ disjoint bad components of $K_N^{-C_n}$ where the cardinality
of the set of bad vertices is $j$. Every component of a composition of $K_N^{-C_n}$ with exactly $m$ bad components is either:
\\
\begin{enumerate}[label = (\roman*)]
\item A good component represented by $x$. There are $p_{j,m,n-1}$ ways of choosing $m$ bad components if $n \geq 1$ (i.e. if $n > j=0$). 
\\
\item Contained in a 2 element bad component represented by $x^2(-y)z^2$. There are $2\cdot p_{j-2,m-1,n-2}$ ways of choosing the remaining $m-1$ bad components if $n \geq 3$ (i.e. if $n\not=j=2$). Otherwise, there is exactly 1 way.
    \\
\item Contained in a 3 element bad component represented by $x^3(-y)z^3$. There are $3\cdot p_{j-3,m-1,n-3}$ ways of choosing the remaining $m-1$ bad components  if $n \geq 4$ (i.e. $n\not=j=3$). Otherwise, there is exactly 1 way.
    \\
\item Contained in a 4 element bad component represented by $x^4(-y)z^4$. There is exactly one way of choosing such a component. 
\end{enumerate}
~\\
This yields the product $(x-2x^2yz^2-3x^3yz^3)(x-x^2yz^2-x^3yz^3)^m$ which represents the number of ways possible to choose at most $m+1$ bad components
of $K_N^{-C_n}$ for $n\not=j \in\{0,2,3,4\}$. Summing the expression over all $m \geq 0$ yields
\\
\\
$H^k(x,y,z) = \underset{m=0}{\overset{\infty}{\sum}}(x-2x^2yz^2)(x-x^2yz^2-x^3yz^3)^m = \frac{x-2x^2yz^2-3x^3yz^3}{1-x+x^2yz^2+x^3yz^3}$.
 \\
 \\
 \\If $h_{j,m,n}$ is defined to be the coefficient of $x^ny^mz^j$ in $H^k(x,y,z)$, then
\\
\\
$h_{j,m,n} = \begin{cases}
0&, \text{ for } n=j=0 \text{ and } m=0\\
-2&, \text{ for } n=j=2 \text{ and } m=1\\
-3&, \text{ for } n=j=3 \text{ and } m=1\\
0&, \text{ for } n=j=4 \text{ and } m=1\\
(-1)^mc_{j,m,n}&, \text{ otherwise}
\end{cases}$
\\
\\
and
\\
\\
$(-1)^mc_{j,m,n} = \begin{cases}
1&, \text{ for } n=j=0 \text{ and } m=1\\
-1&, \text{ for } n=j\in\{2,3,4\} \text{ and } m=1
\end{cases}$
~\\
~\\
~\\
Hence, $G^k(x,y,z)$
\begin{align*}
&\begin{aligned}= H^k(x,y,z) &+ \left(-c_{0,0,0} - h_{0,0,0}\right) + \left(-c_{2,1,2} - h_{2,1,2}\right)x^2yz^2 \\ &+ \left(-c_{3,1,3} - h_{3,1,3}\right)x^3yz^3 + \left(-c_{4,1,4} - h_{4,1,4}\right)x^4yz^4
\end{aligned}
\\
\\
&\begin{aligned} = 1+x^2yz^2 + 2x^3yz^3 - x^4yz^4 + \frac{x-2x^2yz^2-3x^3yz^3}{1-x+x^2yz^2+x^3yz^3}
\end{aligned}.
\end{align*}
\end{proof}
\noindent For the following, $S_n$ will represent the traditional star graph on $n$ vertices.
\begin{theorem}\label{theorem37}
If $n+1 \leq N$, then $C^k(K_N^{-S_{n+1}})  = S(N,k) - \underset{j=1}{\overset{n}{\sum}}\binom{n}{j}S(N-j-1, k-1)$.
\end{theorem}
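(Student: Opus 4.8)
The plan is to apply Theorem~\ref{theorem34} to $G = S_{n+1}$, so that the entire problem reduces to computing the counts $b_{j,m,n+1}$ of collections of $m$ disjoint bad components spanning $j$ vertices (note $S_{n+1}$ has $n+1$ vertices, and the hypothesis $n+1 \le N$ is exactly what licenses the application). The first step is therefore to characterize the bad subsets of $V(S_{n+1})$. Write $c$ for the center and $\ell_1,\dots,\ell_n$ for the leaves. For a subset $V_i \subseteq V(S_{n+1})$, the subgraph of $K_N^{-S_{n+1}}$ induced on $V_i$ is the complement, within $K_{V_i}$, of the star edges lying inside $V_i$. If $V_i$ omits $c$, then $S_{n+1}$ has no edges inside $V_i$, so this induced subgraph is complete and hence connected, and $V_i$ is good. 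If $V_i$ contains $c$ together with $j \ge 1$ leaves, then in the induced subgraph $c$ is adjacent to none of those leaves while the leaves form a clique, so $c$ is isolated and $V_i$ is bad. Thus the bad subsets are exactly those containing $c$ and at least one leaf.

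The key observation --- and the step that makes everything collapse --- is that every bad subset contains the center $c$. Since the components of a composition are pairwise disjoint, no two of them can both contain $c$, so a composition of $K_N$ can harbor at most one bad component. Consequently $b_{j,m,n+1} = 0$ whenever $m \ge 2$, and the double sum in Theorem~\ref{theorem34} collapses to the two terms $m = 0$ and $m = 1$.

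It then remains to evaluate these two terms. For $m = 0$, Remark~\ref{rem11} gives $b_{0,0,n+1} = 1$ and $b_{j,0,n+1} = 0$ for $j > 0$, contributing $(-1)^0 b_{0,0,n+1} S(N,k) = S(N,k)$. For $m = 1$, a single bad component on $j$ vertices consists of $c$ together with a choice of $j-1$ of the $n$ leaves, so $b_{j,1,n+1} = \binom{n}{j-1}$ for $2 \le j \le n+1$ and is $0$ otherwise. This contributes $-\sum_{j=2}^{n+1}\binom{n}{j-1} S(N-j,k-1)$, which after the substitution $i = j-1$ (so that $N-j = N-i-1$ and $i$ runs from $1$ to $n$) becomes $-\sum_{i=1}^{n}\binom{n}{i} S(N-i-1,k-1)$. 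Adding the two contributions yields the claimed identity.

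Because the structure of the bad subsets is so transparent, there is no serious obstacle; the only place to be careful is the bookkeeping that a bad part on $j$ vertices has exactly $j-1$ leaves, which pins down the index shift $N-j = N-(i+1)$ in the reindexing. The one genuinely load-bearing idea is the disjointness argument forcing $m \le 1$, since it is what eliminates the inner sum and turns a double summation into a single binomial sum.
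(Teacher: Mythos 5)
Your proof is correct and follows essentially the same route as the paper: apply Theorem~\ref{theorem34}, observe that at most one bad component can occur, identify the count of single bad components on $j$ vertices as $\binom{n}{j-1}$, and reindex. Your justification that $m\le 1$ because every bad subset must contain the center is in fact stated more cleanly than the paper's corresponding remark, but the argument is the same.
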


\begin{proof}
Let $s_{j,m,n}$ represent the number of ways of choosing exactly $m$ disjoint components from $S_{n}$ such that the cardinality of the set of vertices of the components chosen is $j$.  It is easy to verify $s_{j,1,n} = \binom{n-1}{j-1}$ for $j > 1$. Noting that every subgraph of $S_n$ is connected yields $m \in \{0,1\}$, which coupled with Remark \ref{rem11} yields
\\
\\
 $s_{j,m,n} = \begin{cases}
 1, & j = m = 0 \\
 \binom{n-1}{j-1}, & m = 1 \text{ and } j > 1\\
 0, & \text{otherwise}
 \end{cases}$
 \\
 \\
 \\
 \\
 so, by Theorem \ref{theorem34},
 \begin{align*}
 C^k(K_N^{-S_{n+1}}) &= \underset{j=0}{\overset{n}{\sum}}\underset{m=0}{\overset{j}{\sum}}(-1)^ms_{j,m,n}S(N-j, k-m)
 \\
 &= S(N,k) - \underset{j=1}{\overset{n}{\sum}}\binom{n}{j}\cdot S(N-j-1, k-1).
 \end{align*}
\end{proof}
\noindent For the following, $D_n$ will represent the graph of $2n$ vertices with exactly $n$ disjoint edges.
\begin{theorem}\label{theorem38}
If $2n \leq N$, then $C^k(K_N^{-D_{n}})  = S(N,k) - \underset{j=1}{\overset{n}{\sum}}(-1)^j\binom{n}{j}S(N-2j, k-j)$.
\end{theorem}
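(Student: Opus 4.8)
The plan is to obtain this formula as a direct specialization of Theorem \ref{theorem34}, so that the whole task reduces to identifying the bad components of $K_N^{-D_n}$ and evaluating the counting quantity $b_{j,m,n}$. First I would record that the hypothesis $2n \leq N$ guarantees that $D_n$ (which has $2n$ vertices) sits inside $K_N$ and that every argument $N-2j$ appearing later is nonnegative, so Theorem \ref{theorem34} applies with the role of ``$n$'' there played by $2n$; in particular the outer index $j$ now ranges up to $2n$.

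The real content is a structural claim about which components can be bad, and this is the step I expect to require the most care, since it is where the specific geometry of $D_n$ enters. Writing the matching as the $n$ disjoint edges $\{a_i,b_i\}$, I would show that the \emph{only} bad components of $K_N^{-D_n}$ are the single matching edges $\{a_i,b_i\}$ themselves. A candidate bad component is a vertex set $S \subseteq V(D_n)$ on which the complement of $D_n$ induces a disconnected graph; that induced graph is exactly $K_{|S|}$ with the matching edges lying inside $S$ deleted. The key lemma is that $K_m$ minus any matching is connected once $m \geq 3$: if $u,v \in S$ had their edge deleted, any third vertex $w \in S$ is matched to neither $u$ nor $v$, hence is adjacent to both, so $u - w - v$ is a path. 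Thus disconnection can occur only when $|S| = 2$ and the two vertices are matched, i.e. $S = \{a_i,b_i\}$, whose induced complement is two isolated vertices.

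With the bad components pinned down, the evaluation of $b_{j,m,n}$ is immediate: the $n$ matching edges are pairwise disjoint, so choosing $m$ disjoint bad components is simply choosing $m$ of the $n$ edges, giving $b_{j,m,n} = \binom{n}{m}$ when $j = 2m$ and $b_{j,m,n} = 0$ otherwise. Substituting into Theorem \ref{theorem34}, for each fixed $m$ only the term $j = 2m$ survives the inner sum, collapsing the double sum to
\[
C^k(K_N^{-D_n}) = \sum_{m=0}^{n}(-1)^m\binom{n}{m}S(N-2m,\,k-m),
\]
and isolating the $m=0$ term, which equals $\binom{n}{0}S(N,k) = S(N,k)$, presents the remaining $m \geq 1$ terms as the advertised correction to $S(N,k)$. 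As an independent check that fixes all signs I would run the same count directly by inclusion--exclusion over the events ``$\{a_i,b_i\}$ is a block'': the intersection over an index set $I$ contributes $S(N-2|I|,\,k-|I|)$, and summing $(-1)^{|I|}$ over all $I \subseteq \{1,\dots,n\}$ reproduces precisely the collapsed sum above, confirming the closed form.
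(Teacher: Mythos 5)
Your proposal is correct and takes essentially the same route as the paper: both specialize Theorem \ref{theorem34} after observing that the only bad components of $K_N^{-D_n}$ are the $n$ matching edges, so that the coefficient is $\binom{n}{m}$ when $j=2m$ and $0$ otherwise and the double sum collapses to $\sum_{m=0}^{n}(-1)^m\binom{n}{m}S(N-2m,k-m)$; your lemma that $K_m$ minus a matching is connected for $m\geq 3$ merely makes explicit what the paper dismisses as ``easy to verify.'' One caveat: the collapsed sum equals $S(N,k)+\sum_{m\geq 1}(-1)^m\binom{n}{m}S(N-2m,k-m)$, which differs in sign from the correction term $-\sum_{j\geq 1}(-1)^j\binom{n}{j}S(N-2j,k-j)$ displayed in the theorem statement --- the sign error lies in the paper's statement (its own proof ends with exactly the sum you derive), so your remark that the $m\geq 1$ terms give ``the advertised correction'' glosses over a discrepancy you should have flagged.
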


\begin{proof}
Let $d_{j,m,2n}$ represent the number of ways of choosing exactly $m$ disjoint components from $D_{n}$ such that the cardinality of the set of vertices of the components chosen is $j$. It is easy to verify that
\\
\\
$d_{j,m,2n} = \begin{cases}
          0, & j \not= 2m \\
          \binom{n}{m}, & j = 2m
\end{cases}$
\\
\\
\\
so, by Theorem \ref{theorem34},
\begin{align*}
C(K_N^{-D_n}) &= \underset{j=0}{\overset{2n}{\sum}}\underset{m=0}{\overset{j}{\sum}}(-1)^md_{j,m,2n}S(N-j,k-m)
\\
&= \underset{j=0}{\overset{n}{\sum}}(-1)^jd_{2j,j,2n}S(N-2j,k-j)
\\
&= \underset{j=0}{\overset{n}{\sum}}(-1)^j\binom{n}{j}S(N-2j,k-j).
\end{align*}
\end{proof}

\bibliographystyle{amsplain}
{\bibliography{GCBib}}
\end{document}